\documentclass[journal,onecolumn,12pt]{article}

\usepackage[a4paper,margin=1in]{geometry}
\usepackage{amsmath,amssymb,amsthm}
\usepackage{mathtools}
\usepackage{caption2}
\usepackage{multirow}
\usepackage{graphicx}
\usepackage{enumerate}
\usepackage{bm}
\usepackage{tikz}
\usepackage[hidelinks]{hyperref}
\allowdisplaybreaks[4]
\begin{document}
\title{On the weighted hard-core model and Rado's covering problem for congruent Euclidean balls\thanks{This research was supported by the National Key Research and Development Program of China under Grant 2025YFC3409900, the National Natural Science Foundation of China under Grant 12231014, and Beijing Scholars Program. The work of C. Xie was supported by the National Natural Science Foundation of China under Grant No. 12401440.}}
\author{Chengfei Xie\thanks{C. Xie is with the Institute of Mathematics and Interdisciplinary Sciences, Xidian University, Xi'an 710126, China (e-mail: cfxie@cnu.edu.cn).}
 ~and Gennian Ge\thanks{G. Ge is with the School of Mathematical Sciences, Capital Normal University, Beijing 100048, China (e-mail: gnge@zju.edu.cn).}}

\newtheorem{theorem}{Theorem}[section]
\newtheorem{lemma}[theorem]{Lemma}
\newtheorem{proposition}[theorem]{Proposition}
\newtheorem{corollary}[theorem]{Corollary}
\newtheorem{conjecture}[theorem]{Conjecture}
\newtheorem{definition}[theorem]{Definition}
\newtheorem{construction}[theorem]{Construction}
\newtheorem{remark}[theorem]{Remark}
\newtheorem{claim}[theorem]{Claim}
\newtheorem{example}[theorem]{Example}
\newtheorem{question}[theorem]{Question}

\maketitle
\begin{abstract}
Let $K$ be a symmetric convex body in $\mathbb{R}^d$ and let $f(K)$ denote the largest constant $c$ such that every finite collection of translates of $K$ contains a pairwise disjoint subcollection whose total volume is at least $c$ times the volume of the union of the original collection. The classical Vitali covering lemma gives $f(K)\geq3^{-d}$. In this paper, we establish two improvements. First, by a purely combinatorial argument, we prove that
$$
f(K)\geq \frac{2}{3^d + 2^d}
$$
for every symmetric convex body $K$. This improves the Vitali bound by a factor tending to $2$ as $d$ tends to infinity. Second, using a weighted hard-core model together with a weighted geometric estimate for intersections of Euclidean balls, we show that, for all sufficiently large $d$,
$$
f(B^d)\geq
 \left(
   \log\frac{3}{1+\sqrt3}
   -O\left(\frac{\log d}{d}\right)
 \right)d\,3^{-d},
 $$
where $B^d$ is the unit Euclidean ball in $\mathbb{R}^d$. Thus, the classical lower bound is improved by a factor of order $d$.
\smallskip
\end{abstract}
\medskip

\noindent {{\it Keywords\/}: Rado's covering problem; Vitali covering lemma; convex bodies; Euclidean balls; weighted hard-core model; intersection graph; independent set.}

\smallskip

\noindent {{\it AMS subject classifications\/}: 52C17, 52A40, 05C69, 60C05.}

\section{Introduction}
In 1928, Rad\'{o} observed that for every finite collection of compact intervals that covers the unit interval, there is a subcollection consisting of pairwise disjoint intervals with total length at least $1/2$ and that this constant is sharp \cite{bwmeta1.element.bwnjournal-article-fmv11i1p25bwm}. In 1949, Rado generalized problems of this type and introduced the following definition \cite{MR30782}.
\begin{definition}\label{bigf}
Let $K$ be a convex body. Let $F(K)$ denote the largest constant $c\geq0$ such that every finite collection $\mathcal{C}$ of homothetic copies of $K$ admits a pairwise disjoint subcollection $\mathcal{S}$ satisfying
$$
    \operatorname{vol}\left(\bigcup_{K'\in\mathcal S}K'\right)
    \geq
    c\operatorname{vol}\left(\bigcup_{K'\in\mathcal{C}}K'\right).
$$
\end{definition}
Let $Q^d=[0,1 ]^d$ be the $d$-dimensional cube. Rad\'{o}'s result then shows that $F(Q^1)=1/2$. In the same paper, Rad\'{o} also proved that $F(Q^2)\geq1/9$ and conjectured the sharp bound $F(Q^2)=1/4$. Remarkably, in 1973, Ajtai disproved this conjecture \cite{MR319053}:
$$
F(Q^2)<1/4.
$$

The celebrated Vitali covering lemma states that in an arbitrary metric space, for every finite collection $\mathcal{C}$ of balls, there exists a pairwise disjoint subcollection $\mathcal{S}$ such that
$$
    \bigcup_{B\in\mathcal{C}}B
    \subseteq
    \bigcup_{B\in\mathcal S}3B,
$$
where $3B$ is the ball with the same center and three times the radius of $B$. We refer readers to \cite{MR924157} for a proof. Therefore, we have
$$
F(K)\geq3^{-d}
$$
for every symmetric convex body $K$.

The following variant of Definition \ref{bigf} was also introduced by Rado \cite{MR30782}.
\begin{definition}
Let $K$ be a convex body. Let $f(K)$ denote the largest constant $c\geq 0$ such that every finite collection $\mathcal{C}$ of translates of $K$ admits a pairwise disjoint subcollection $\mathcal{S}$
satisfying
$$
    \operatorname{vol}\left(\bigcup_{K'\in\mathcal S}K'\right)
    \geq
    c\operatorname{vol}\left(\bigcup_{K'\in\mathcal{C}}K'\right).
$$
\end{definition}
\begin{remark}
Although both $F(K)$ and $f(K)$ can be defined for general convex bodies, we restrict throughout to symmetric convex bodies.
\end{remark}
Clearly,
$$
f(K)\geq F(K).
$$
By considering all translates of $K$ containing the origin, we have
\begin{equation}\label{upperfk}
  f(K)\leq2^{-d}.
\end{equation}
In \cite{MR30782}, Rado proved that the upper bound is sharp for cubes:
$$
  f(Q^d)=2^{-d}.
$$
We refer readers to \cite{2026arXiv260417509D} for a recent survey of this topic.

In this paper, we focus on the lower bound for $f(K)$. Our first result is a constant factor improvement.
\begin{theorem}\label{ruo}
For every symmetric convex body $K\subseteq\mathbb{R}^d$,
$$
        f(K)\geq \frac{2}{3^d+2^d}.
$$
Equivalently, every finite collection $\mathcal{C}$ of translates of $K$ contains a pairwise disjoint subcollection
$\mathcal S$ such that
$$
    \operatorname{vol}\left(\bigcup_{K'\in\mathcal S}K'\right)
    \geq
    \frac{2}{3^d+2^d}
    \operatorname{vol}\left(\bigcup_{K'\in\mathcal{C}}K'\right).
$$
\end{theorem}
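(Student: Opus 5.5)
The plan is to replace the unordered Vitali argument by a \emph{directional greedy} selection, which saves roughly half of the annulus $3K\setminus 2K$. Write the collection as $\mathcal C=\{K+x_1,\dots,K+x_n\}$ and let $U$ be its union. Since $K$ is symmetric and convex, $K+x_i$ and $K+x_j$ meet if and only if $x_i-x_j\in 2K$.

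\textbf{Greedy selection.} Fix a unit vector $u$ and order the centers so that $\langle u,x_1\rangle\le\dots\le\langle u,x_n\rangle$. Scan the list in this order and keep $K+x_i$ whenever it is disjoint from all translates kept so far; call the kept family $\mathcal S$.
\begin{itemize}
\item $\mathcal S$ is pairwise disjoint by construction.
\item Each discarded $K+y$ meets some kept $K+x_s$ with $\langle u,x_s\rangle\le\langle u,y\rangle$.
\item Hence $y-x_s\in 2K_u^+$, where $K_u^{+}=K\cap\{z:\langle u,z\rangle\ge 0\}$.
\end{itemize}
Therefore
$$U\subseteq\bigcup_{s\in\mathcal S}\bigl(x_s+K+2K_u^+\bigr),$$
which gives
$$\operatorname{vol}(U)\le|\mathcal S|\operatorname{vol}(K+2K_u^+).$$
It then suffices to find a direction $u$ with
$$\operatorname{vol}(K+2K_u^+)\le\tfrac{3^d+2^d}{2}\operatorname{vol}K .$$

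\textbf{Reduction to an intersection bound.} Write $A_u=K+2K_u^+$ and $A_{-u}=K+2K_u^-$, where $K_u^-=-K_u^+$.
\begin{itemize}
\item By central symmetry, $A_{-u}=-A_u$, so the two sets have equal volume.
\item Their union is $K+2K=3K$, since every $b\in K$ lies in $K_u^+$ or in $K_u^-$.
\item Setting $I_u=A_u\cap A_{-u}$, we get $2\operatorname{vol}(A_u)=3^d\operatorname{vol}K+\operatorname{vol}(I_u)$.
\end{itemize}
So the target is equivalent to
$$\operatorname{vol}(I_u)\le 2^d\operatorname{vol}K$$
for a suitable $u$. A point $z$ lies in $I_u$ exactly when the convex set $C_z=(z-K)\cap 2K$ meets both closed half-spaces determined by $u$. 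Note that $C_z$ is centrally symmetric about $z/2$.

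\textbf{Main obstacle.} The whole difficulty is the estimate on $I_u$, and a fixed direction cannot work in general. For the cube with $u=e_1$, the set $A_u$ has relative volume $2\cdot 3^{d-1}$, which exceeds $(3^d+2^d)/2$. So $u$ must be chosen depending on $K$.

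What I would try first is averaging over $u$ drawn from the uniform measure on the sphere, and then taking the best $u$. This gives
$$\mathbb E_u\operatorname{vol}(I_u)=\int_{3K}\Pr_u\bigl[C_z\text{ meets both sides of }u^\perp\bigr]\,dz .$$
The intended split of the integral is as follows.
\begin{itemize}
\item For $z\in 3K\setminus 2K$, the set $C_z$ should be a thin region near the boundary of $2K$, lying on one side of a hyperplane through $z/2$. Its probability of straddling $u^\perp$ should then be small.
\item The goal is to show that the total integral is at most $\operatorname{vol}(2K)=2^d\operatorname{vol}K$.
\end{itemize}
Concretely, I would aim for the pointwise bound
$$\Pr_u[\cdots]\le \mathbf 1_{2K}(z)+\text{(something integrating to }0\text{ against the deficit)}.$$
Comparing via the radial function $\|z\|_K$ should convert this into a one-dimensional inequality in $\|z\|_K\in[0,3]$.

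If averaging over directions proves too lossy, the fallback is to keep the same scheme but vary the \emph{order} itself, for instance a lexicographic order or one adapted to $K$ through John's position. The covering step
$$U\subseteq\bigcup_{s\in\mathcal S}\bigl(x_s+K+2P\bigr)$$
holds for any ordering in which earlier neighbors lie in a region $P$ with $P\cup(-P)=K$. The task would then be to choose $P$ so that the intersection $(K+2P)\cap(K-2P)$ has volume at most $2^d\operatorname{vol}K$.
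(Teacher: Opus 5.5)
Your reduction is valid up to the target $\operatorname{vol}(I_u)\le 2^d\operatorname{vol}K$, but you never prove that target. Worse, for $d\ge 3$ it is false for \emph{every} symmetric convex body $K$ and \emph{every} direction $u$, so neither averaging over $u$ nor a cleverer choice of $u$ can save it.

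Here is why. Since $C_z$ is convex, it meets both closed half-spaces iff it meets $u^\perp$; hence $I_u=K+2S$ exactly, with $S=K\cap u^\perp$. The section of $K+2S$ at height $h$ in direction $u$ is $K_h+2S$, where $K_h$ is the corresponding section of $K$. By Brunn's theorem $\operatorname{vol}_{d-1}(S)\ge\operatorname{vol}_{d-1}(K_h)$. Brunn--Minkowski in $u^\perp$ then gives $\operatorname{vol}_{d-1}(K_h+2S)\ge 3^{d-1}\operatorname{vol}_{d-1}(K_h)$, and integrating over $h$ yields $\operatorname{vol}(I_u)\ge 3^{d-1}\operatorname{vol}K$. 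Hence $\operatorname{vol}(K+2K_u^+)\ge 2\cdot 3^{d-1}\operatorname{vol}K$ always. Your cube with $u=e_1$ is therefore extremal, not an unlucky case. Finally, $2\cdot3^{d-1}>\frac{3^d+2^d}{2}$ exactly when $3^{d-1}>2^d$, i.e.\ $d\ge 3$.

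The fallbacks do not help either. A lexicographic order has a positive cone containing an open half-space, and $K+2\bigl(K\cap\{\langle u,z\rangle>0\}\bigr)$ has the same volume as $K+2K_u^+$. John's position changes nothing, because $\operatorname{vol}(K+2K_u^+)/\operatorname{vol}K$ is invariant under linear maps. So this greedy covering scheme can never certify more than $f(K)\ge\frac{3}{2}\cdot 3^{-d}$, which is below the theorem's bound for $d\ge 3$.

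The missing idea is to replace a per-body covering by an amortized count, as the paper does.
\begin{enumerate}
\item Take a \emph{maximum} independent set $I$ of the intersection graph. Partition $U$ into the disjoint pieces $A_i=(K+v_i)\setminus\bigcup_{j<i}(K+v_j)$; every vertex has $n(v)=|N[v]\cap I|\ge 1$.
\item The vertices with $n(v)=1$ whose unique $I$-neighbour is a given $v\in I$ form a clique. Otherwise, exchanging $v$ for two non-adjacent such vertices would enlarge $I$. This needs $I$ to be unimprovable by a 1-for-2 swap, which a greedy selection does not provide.
\item Hence the union $E_v$ of their pieces satisfies $E_v-E_v\subseteq 4K$, and Brunn--Minkowski gives $\operatorname{vol}(E_v)\le 2^d\operatorname{vol}K$.
\item The pieces of vertices with $n(v)\ge 2$ are counted at least twice in $\sum_{v\in I}\operatorname{vol}\bigl(\bigcup_{v_i\in N[v]}A_i\bigr)\le 3^d\alpha\operatorname{vol}K$.
\item Together these give $\operatorname{vol}(U)\le\frac{3^d+2^d}{2}\,\alpha\operatorname{vol}K$.
\end{enumerate}
It is the disjointness of the $A_i$ that lets the pieces of vertices seen by two or more selected bodies be charged at most half to each. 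A covering by fixed shapes $x_s+K+2K_u^+$ has no way to do this.
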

\begin{remark}
Note that
$$
    \frac{6}{5}\cdot3^{-d}
    \leq
    \frac{2}{3^d+2^d}
    =
    \frac{2}{1+(2/3)^d}3^{-d}
    =
    (2-o_d(1))3^{-d}.
$$
So Theorem \ref{ruo} improves the Vitali lower bound by a constant factor that is at least $6/5$ and tends to $2$ as $d$ tends to infinity.
\end{remark}

Our second result concerns Euclidean balls. Let $B^d$ denote the Euclidean unit ball in $\mathbb{R}^d$. By \eqref{upperfk} and Theorem \ref{ruo}, we have
$$
\frac{2}{3^d+2^d}\leq f(B^d)\leq2^{-d}.
$$

The Kabatjanski\u{\i}-Leven\v{s}te\u{\i}n upper bound for spherical codes \cite{MR0514023} yields an exponential improvement for the upper bound $f(B^d)\leq O(2.447^{-d})$; see \cite{2026arXiv260417509D} for a proof. As for the lower bound, Rado \cite{MR30782} proved that
$$
f(B^2)\geq\frac{\pi}{8\sqrt3}.
$$
In high dimensions, we improve upon Theorem \ref{ruo} by a factor of order $d$ for $f(B^d)$.
\begin{theorem}\label{qiang}
For sufficiently large $d$,
$$
 f(B^d)
 \ge
 \left(
   \log\frac{3}{1+\sqrt3}
   -O\left(\frac{\log d}{d}\right)
 \right)d\,3^{-d}.
$$
\end{theorem}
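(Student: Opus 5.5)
We plan to prove Theorem \ref{qiang} with a randomized selection: a weighted hard-core model on the intersection graph of the balls, combined with a geometric estimate for intersections of Euclidean balls. Let $\mathcal C=\{x_i+B^d\}$ be a finite family with union $U$, and let $G$ be its intersection graph, so that $i\sim j$ iff $|x_i-x_j|\le 2$. For a weight $\lambda_i>0$ on each ball, let $\mathbf I$ be drawn from the hard-core measure $\Pr[\mathbf I=I]\propto\prod_{i\in I}\lambda_i$ on independent sets of $G$. The expected covered volume is $\mathbb E|\mathbf I|\operatorname{vol}(B^d)=\operatorname{vol}(B^d)\sum_i \Pr[i\in\mathbf I]$. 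It therefore suffices to show $\sum_i\Pr[i\in \mathbf I]\ge c\,d\,3^{-d}\operatorname{vol}(U)/\operatorname{vol}(B^d)$ with $c=\log\frac{3}{1+\sqrt3}-O(\log d/d)$.

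\textbf{Local analysis.} We use the standard local analysis (as in Davies--Jenssen--Perkins--Roberts). Fix a vertex $v$ and condition on $\mathbf I\setminus N[v]$. Let $Y\subseteq N(v)$ be the set of uncovered neighbours, i.e.\ those with no neighbour in $\mathbf I\setminus N[v]$. Then $\Pr[v\in\mathbf I\mid Y]=\lambda_v/(\lambda_v+Z_{G[Y]})$. Moreover, each $u\in Y$ lies in $\mathbf I$ with conditional probability at least a comparable amount. This yields two lower bounds for the same quantity: $\Pr[v\in\mathbf I]\ge \lambda_v\,\mathbb E[1/Z_Y]$, and $\mathbb E|\mathbf I\cap N(v)|$ at least a weighted occupancy term involving $\log Z_Y$. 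By convexity of $e^{-x}$ we combine them as $\alpha\,\Pr[v\in \mathbf I]+\beta\,\mathbb E|\mathbf I\cap N(v)|\ge$ a function minimized when $\log Z_Y\approx\log(\text{something of order } d)$. This is the source of the factor $d$, exactly as in Shearer-type bounds, where a neighbourhood of $3^d$ balls has few edges in the appropriate weighted sense.

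\textbf{Geometric input.} We need to translate neighbourhood sums into volume. Here we use a point-based double counting. For each point $p\in U$, choose a weight profile so that $\sum_{i: p\in x_i+3B^d\,\text{or similar}}$ of occupation probabilities controls $\mathbb 1_U(p)$. The key weighted geometric estimate says the following: two balls of radius $1$ whose centers are at distance $t\le2$ have intersection volume at most $(1-t^2/4)^{d/2}$ times $\operatorname{vol}(B^d)$. Since $\sum_j\Pr[j\in\mathbf I]$ over balls near a point is at most $1$ (disjointness), this gives that the local neighbourhoods are "sparse" on the scale relevant to Shearer's bound. The constant $\log\frac{3}{1+\sqrt3}$ should arise from optimizing the weight $\lambda$ as a function of the distance from the chosen point, integrating the ball-intersection estimate radially. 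The $\sqrt3$ plausibly comes from the critical distance where $(1-t^2/4)$ balances against the $3^{-d}$ volume ratio (e.g.\ $t=1$ vs.\ $t=\sqrt3$ geometry of three mutually touching balls).

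\textbf{Main obstacle.} The hardest step is to set up the weights so that the random independent set's occupancy inequality, summed against Lebesgue measure on $U$, closes with the exact constant. This requires a weighted inequality of the form $\int_U\sum_i w_i(p)\,\bigl(\alpha\Pr[i\in\mathbf I]+\beta\sum_{j\sim i}\Pr[j\in\mathbf I]\bigr)\,dp\le C\sum_i\Pr[i\in\mathbf I]$, with the $O(\log d/d)$ losses coming from concentration of ball-intersection volumes near the critical radius. We would first try taking $\lambda$ uniform with $\lambda\approx d^{-1}$ polynomially small and weights $w_i(p)$ depending only on $|p-x_i|$. We would verify the inequality via the one-dimensional calculus of $(1-t^2/4)^{d/2}$ versus $3^{-d}$, and then optimize the free parameters to obtain $\log\frac{3}{1+\sqrt3}$.
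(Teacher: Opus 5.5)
Your outline has the right overall shape (a hard-core model on the intersection graph, a Davies--Jenssen--Perkins--Roberts style local analysis, a geometric input). But the ingredient that makes the local analysis work for Rado's problem is missing, and the substitute you propose, uniform fugacity with point weights $w_i(p)$ depending only on $|p-x_i|$, breaks down. The intersection graph has unbounded degree, and the number of balls is not controlled by $\operatorname{vol}(U)$. Take a ball centred at $v$ together with $N$ balls whose centres nearly coincide at distance at most $2$ from $v$. With uniform fugacity $\lambda$ the graph is a clique, so the expected occupancy of $N[v]$ is below $1$, while $\log Z_{N[v]}=\log(1+(N+1)\lambda)\to\infty$. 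Hence no bound of the form $\mathbb{E}|\mathbf I\cap N[v]|\ge c\log Z$, which is what you must pair with the occupation identity for $v$, can hold. (That identity is $\Pr[v\in\mathbf I]=\lambda_v\mathbb{E}[1/(\lambda_v+Z_{G[Y]})]$; your displayed inequality with $Z_Y$ goes the wrong way.)

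The paper's device is to partition $U$ into disjoint pieces $A_i\subseteq B^d(v_i,1)$, set $a_i=\operatorname{vol}(A_i)/\omega_d$, give $v_i$ fugacity $\lambda a_i$, and choose the root with probability $a_i/M$. This yields the following:
\begin{itemize}
\item the exact identity $\mathbb{E}|\mathbf X|/M=\lambda\,\mathbb{E}[1/Z_{\mathbf T}]$;
\item $\sum_i a_i=\operatorname{vol}(U)/\omega_d$;
\item $\sum_{v_i\in N[v]}a_i\le 3^d$, since those pieces lie in $B^d(v,3)$, which gives $\mathbb{E}|\mathbf X|/M\ge 3^{-d}\mathbb{E}\rho_{\mathbf T}$;
\item a cluster of near-duplicate balls carries total weight at most the volume of its union.
\end{itemize}
Weights depending only on $|p-x_i|$ ignore the multiplicity of coverage and cannot play this role.

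The geometric input you propose is also the wrong one. The pairwise bound $\operatorname{vol}(B^d(x,1)\cap B^d(y,1))\le(1-t^2/4)^{d/2}\omega_d$ says nothing about the quantity that must be controlled. That quantity is the weighted local density $\sum_{v_i\in T}a_i m_{H,i}/m_H$ of the disjoint pieces inside a neighbourhood $T\subseteq N[v]$, where $m_{H,i}=\sum_{v_j\in N_H[v_i]}a_j$ and $m_H=\sum_{v_i\in T}a_i$; $T$ may contain arbitrarily many balls. The paper's weighted lens lemma handles it in two steps. Ordering neighbours by distance to $v$ costs a factor $2$. The pieces $A_j$ with $\|v_j-v_i\|\le2$ and $\|v_j-v\|\le\|v_i-v\|$ lie in $L+B^d(0,1)$, where the lens $L=B^d(v_i,2)\cap B^d(v,\|v_i-v\|)$ is contained in a ball of radius $\sqrt3$. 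Hence the density is at most $2(1+\sqrt3)^d$. Together with $b_{H,i}\ge e^{-\lambda m_{H,i}}$ and Jensen, this gives $\rho_T\ge e^{-2\lambda(1+\sqrt3)^d}\log Z_T$.

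Combine this with $\mathbb{E}|\mathbf X|/M\ge\lambda e^{-z}$ and take $\lambda=(1+\sqrt3)^{-d}/d$. This $\lambda$ is exponentially small, not of order $1/d$; with volume weights, $\lambda\approx 1/d$ would make $e^{-2\lambda(1+\sqrt3)^d}$ super-exponentially small. The result is the Lambert-$W$ bound with $z^*=d\log\frac{3}{1+\sqrt3}-2\log d+O(1)$. So the constant is simply the logarithm of the ratio between the weighted degree bound $3^d$ and the lens bound $(1+\sqrt3)^d$. It does not come from a radial optimisation of weights or from three mutually touching balls. The inequality you flag as the main obstacle is exactly the core of the proof, and your plan leaves it unestablished.
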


To the best of our knowledge, for arbitrary symmetric convex bodies, no improvement over the bound $f(K)\geq3^{-d}$ had previously been obtained. Thus, Theorem \ref{ruo} provides the first constant-factor improvement, while Theorem 1.6 goes substantially further for Euclidean balls, improving the classical Vitali lower bound by a factor of order $d$.

The proofs of Theorems 1.4 and 1.6 rely on fundamentally different ideas.
Theorem 1.4 follows from a purely combinatorial analysis of a maximum independent
set in the intersection graph. To prove Theorem 1.6, we partition the union of the balls into disjoint measurable subsets, each assigned to one of the balls, and introduce a weighted
hard-core model with vertex fugacities proportional to the normalized volumes of these subsets. This weighting allows the model to capture
both the combinatorial structure of the intersection graph and the geometric contribution of each ball. Together with a weighted geometric estimate
for intersections of Euclidean balls, this approach yields the dimension-dependent improvement in Theorem 1.6. Building on the hard-core model framework developed by Jenssen, Joos, and Perkins \cite{MR3836667, MR3898718}, our approach introduces a weighted formulation adapted to the nonuniform geometric setting arising
in Rado's covering problem.
\section{Preliminaries}
Let $K\subseteq\mathbb{R}^d$ be a symmetric convex body.
For a point $x\in\mathbb{R}^d$ and a scalar $r>0$, we define
$$
K+x=\{y+x:y\in K\}\quad\text{and}\quad rK=\{rx:x\in K\}.
$$
Let $\|\cdot\|_K$ denote the Minkowski functional with respect to $K$; that is,
$$
\|x\|_K=\inf\{r>0:x\in rK\}.
$$

For a finite collection $\mathcal{C}$ of translates of $K$, let $\alpha(\mathcal{C})$ denote the maximum cardinality of a pairwise disjoint subcollection of $\mathcal{C}$, where two translates are regarded as intersecting if they intersect at boundaries. We have
$$
 f(K)
 =
 \inf_{\mathcal{C}}
 \frac{\alpha(\mathcal{C})\operatorname{vol}(K)}
      {\operatorname{vol}\left(\bigcup_{K'\in\mathcal{C}}K'\right)},
$$
where the infimum is over all nonempty finite collections of translates of $K$.

Given a finite set $V=\{v_1, v_2, \ldots, v_n\}$ of $n$ points in $\mathbb{R}^d$, let $\mathcal{C}=\{K+v_i:1\leq i\leq n\}$ be a collection of $n$ translates of $K$ and write
$$
U=\bigcup_{i=1}^n\left(K+v_i\right).
$$
Let $G=(V, E)$ be the intersection graph of $\mathcal{C}$. That is, for distinct vertices $v_i, v_j\in V$, $\{v_i, v_j\}\in E$ if and only if $\left(K+v_i\right)\cap \left(K+v_j\right)\neq\emptyset$, or equivalently, $\|v_i-v_j\|_K\leq2$.

We partition $U$ into disjoint measurable subsets as follows. Define
$$
A_1=K+v_1
$$
and for $2\leq k\leq n$,
$$
A_k=\left(K+v_k\right)\setminus\bigcup_{i=1}^{k-1}\left(K+v_i\right).
$$
Then the sets $A_i$ are pairwise disjoint, each $A_i$ is contained in $K+v_i$, and $U=\bigsqcup_{i=1}^nA_i$, where $\bigsqcup$ means the disjoint union.

Define
$$
a_i=\frac{\operatorname{vol}(A_i)}{\operatorname{vol}(K)}\quad\text{and}\quad M=\sum_{i=1}^na_i.
$$
So
\begin{equation}\label{eq1}
  M=\sum_{i=1}^na_i=\sum_{i=1}^n\frac{\operatorname{vol}(A_i)}{\operatorname{vol}(K)}=\frac{\operatorname{vol}(U)}{\operatorname{vol}(K)}.
\end{equation}

For a vertex $v\in V$, let $N(v)$ and $N[v]$ be its open and closed neighborhoods, respectively; that is,
$$
N(v)=\{u\in V:\{u, v\}\in E\}\quad\text{and}\quad N[v]=N(v)\cup\{v\}.
$$

Throughout the paper, $\log$ denotes the natural logarithm.

\section{Proof of Theorem \ref{ruo}}
As a warm-up, we give a purely combinatorial proof of Theorem \ref{ruo} in this section without the hard-core model.

Let $\alpha$ be the independence number of $G$ and choose an independent set $I\subseteq V$ such that $|I|=\alpha$. $\{K+v:v\in I\}$ is a pairwise disjoint subcollection, and thus
$$
\operatorname{vol}\left(\bigcup_{v\in I}(K+v)\right)=\alpha\operatorname{vol}(K).
$$

Define
$$
n(v)=|N[v]\cap I|.
$$
If $n(v)=0$, then $\{v\}\cup I$ is an independent set of size $\alpha+1>\alpha$, a contradiction. So
\begin{equation}\label{eq2}
  n(v)\geq1\text{ for every }v\in V.
\end{equation}

We split $M$ into two parts:
$$
P_1=\sum_{\substack{1\leq i\leq n\\ n(v_i)=1}}a_i,\qquad P_2=\sum_{\substack{1\leq i\leq n\\ n(v_i)\geq2}}a_i.
$$
By \eqref{eq2}, $M=P_1+P_2$. Our goal is to derive upper bounds for $P_1$ and $P_2$, which will imply an upper bound for $M$.

\subsection{An upper bound for $P_1$}
For every $v\in I$, we define
$$
Q_v=\{u\in V:N[u]\cap I=\{v\}\}
$$
and let $q_v=\sum_{v_i\in Q_v}a_i$. For every $u\in V$ with $n(u)=1$, $u$ belongs to exactly one of the sets $Q_v$. It follows that
\begin{equation}\label{eq3}
  P_1=\sum_{v\in I}q_v.
\end{equation}
\begin{lemma}\label{clique}
For every $v\in I$, $Q_v$ induces a clique in $G$.
\end{lemma}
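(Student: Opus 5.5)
The argument is by contradiction, using the maximality of $I$ through a standard swap. Fix $v\in I$ and suppose two distinct vertices $u,w\in Q_v$ are non-adjacent in $G$. We then exhibit an independent set of size $\alpha+1$, namely
$$
I'=(I\setminus\{v\})\cup\{u,w\}.
$$

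First, neither $u$ nor $w$ lies in $I\setminus\{v\}$. If $u\in I$, then $u\in N[u]\cap I=\{v\}$, so $u=v$; the same holds for $w$. Since $u\neq w$, at most one of them equals $v$. Suppose $u=v$. Then $w\in Q_v$ with $w\neq v$ gives $v\in N[w]$, so $w$ is adjacent to $u$, contradicting our assumption. Hence $u,w\notin I$, and $|I'|=\alpha-1+2=\alpha+1$.

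Next, $I'$ is independent. The set $I\setminus\{v\}$ is independent because it is a subset of $I$. The vertex $u$ has no neighbour in $I\setminus\{v\}$, since $N[u]\cap I=\{v\}$; the same holds for $w$. Finally, $u$ and $w$ are non-adjacent by assumption. This contradicts the choice of $|I|=\alpha$ as the independence number, so every two distinct vertices of $Q_v$ are adjacent, and $Q_v$ induces a clique.

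The only point needing care is the degenerate case where one of $u,w$ equals $v$ itself. That case is handled above by noting that every other member of $Q_v$ is adjacent to $v$.
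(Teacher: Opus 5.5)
Your proposal is correct and follows the paper's argument: assume two non-adjacent vertices in $Q_v$ and swap them in for $v$ to get an independent set of size $\alpha+1$. Your explicit check that $u,w\notin I$, including the case where one of them equals $v$, justifies the cardinality count $|I'|=\alpha+1$, which the paper's proof leaves implicit.
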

\begin{proof}
Fix $v\in I$. Suppose, to the contrary, that there are distinct vertices $u, w\in Q_v$ that are not adjacent.

Since $u, w\in Q_v$, we have
$$
N[u]\cap I=N[w]\cap I=\{v\}.
$$
Thus neither $u$ nor $w$ is adjacent to any vertex of
$I\setminus\{v\}$. By assumption, $u$ and $w$ are also not adjacent
to each other. Consequently,
$$
(I\setminus\{v\})\cup\{u,w\}
$$
is an independent set with cardinality $|I|-1+2=\alpha+1$,
contradicting the maximality of $I$.
\end{proof}
\begin{lemma}\label{qvbound}
For every $v\in I$, $q_v\leq2^d$.
\end{lemma}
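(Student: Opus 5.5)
We need to show $q_v\le 2^d$. By Lemma \ref{clique}, $Q_v$ is a clique, so the translates $K+u$, $u\in Q_v$, pairwise intersect. Since $q_v=\sum_{v_i\in Q_v}a_i$ and the sets $A_i$ are pairwise disjoint with $A_i\subseteq K+v_i$, we have
$$
q_v\operatorname{vol}(K)=\operatorname{vol}\Bigl(\bigsqcup_{v_i\in Q_v}A_i\Bigr)\le \operatorname{vol}\Bigl(\bigcup_{u\in Q_v}(K+u)\Bigr).
$$
So it suffices to show that the union of a family of pairwise intersecting translates of a symmetric convex body $K$ has volume at most $2^d\operatorname{vol}(K)$.

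The plan is to find a single point $c$ with $\|u-c\|_K\le 1$ for every $u\in Q_v$. Then $K+u\subseteq 2K+c$ for all $u\in Q_v$, since $\|y-c\|_K\le\|y-u\|_K+\|u-c\|_K\le 2$. The union then has volume at most $\operatorname{vol}(2K)=2^d\operatorname{vol}(K)$. Pairwise intersection means $\|u-w\|_K\le 2$, i.e.\ the balls $K+u$ (radius $1$ in the $K$-norm) pairwise intersect, and we want them to have a common point. For a general norm this Helly-type statement fails, so we must use the extra structure available: every $u\in Q_v$ is adjacent to (or equal to) the fixed vertex $v\in I$.

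The natural choice is $c=v$ itself. However, we only know $\|u-v\|_K\le 2$, which gives $K+u\subseteq 3K+v$ and the weaker bound $3^d$. So a direct center argument does not give $2^d$. A better route is to bound the union volume without a common center: for a family of pairwise intersecting translates, $\bigcup_{u}(K+u)=S+K$ with $S=Q_v$ a set of $K$-diameter at most $2$. By the Brunn–Minkowski/Rogers–Shephard-type isodiametric inequality for normed spaces (Busemann / Mel'nikov), a set of diameter $2$ in the $K$-norm satisfies $\operatorname{vol}(\operatorname{conv} S)\le\operatorname{vol}(K)$. Indeed, $\operatorname{conv}S-\operatorname{conv}S\subseteq 2K$, and Brunn–Minkowski gives $2\operatorname{vol}(\operatorname{conv}S)^{1/d}\le \operatorname{vol}(\operatorname{conv}S-\operatorname{conv}S)^{1/d}\le 2\operatorname{vol}(K)^{1/d}$. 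Then, applying Brunn–Minkowski again only gives an upper bound in the wrong direction for $\operatorname{vol}(\operatorname{conv}S+K)$, so instead use
$$
\operatorname{conv}S+K\subseteq \tfrac12(\operatorname{conv}S-\operatorname{conv}S)+\tfrac12(\operatorname{conv}S+\operatorname{conv}S)+K .
$$
This seems messy. A cleaner fix: set $T=\operatorname{conv}S$ and $L=\tfrac12(T-T)\subseteq K$, which is symmetric. We want $\operatorname{vol}(T+K)\le 2^d\operatorname{vol}(K)$. Choose $c$ to be the centroid (or any point) of $T$. Is $T-c\subseteq K$? Not in general; the circumradius can exceed $1$ (Jung-type constants).

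The main obstacle is therefore exactly this covering step. The first attempt will be the claim $T+K\subseteq T-T+K'$ for suitable translates, combined with the exact identity for Euclidean/cube cases. If that fails, I would fall back on bounding $\operatorname{vol}(T+K)$ by mixed volumes, using $T\subseteq K+c$ after translation whenever $T$ is centrally symmetric.
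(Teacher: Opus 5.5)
\textbf{There is a genuine gap.} Your reduction is fine. The sets $A_i$ are pairwise disjoint with $A_i\subseteq K+v_i$, so it suffices to bound the volume of $E=\bigcup_{u\in Q_v}(K+u)=S+K$, where $S=Q_v$. You are also right that a common-center (Helly/Jung-type) argument cannot give $2^d$.

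However, the proposal never proves the one nontrivial step, $\operatorname{vol}(E)\le 2^d\operatorname{vol}(K)$. It ends by calling this ``the main obstacle'' and lists untested fallbacks:
\begin{itemize}
\item a vague inclusion $T+K\subseteq T-T+K'$;
\item a mixed-volume estimate, which would still need an Aleksandrov--Fenchel-type concavity argument to compare $V(T[i],K[d-i])$ with $V(\tfrac12(T-T)[i],K[d-i])$, and which you do not carry out;
\item the centrally symmetric case, which does not cover a general $T=\operatorname{conv}S$.
\end{itemize}
As written, the proposal does not establish the lemma.

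The missing idea is small, and you already had the right tool. You applied the Brunn--Minkowski isodiametric bound $\operatorname{vol}(X)\le 2^{-d}\operatorname{vol}(X-X)$ to $\operatorname{conv}S$. That set has $K$-diameter $2$, so the bound only controls the set of centers, not the union. Brunn--Minkowski applied to the sum $T+K$ itself goes the wrong way, as you noticed. The right move is to apply the isodiametric bound to the union itself, which has $K$-diameter at most $4$. This is what the paper does with $E_v=\bigsqcup_{v_i\in Q_v}A_i$.

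Take $y\in K+v_i$ and $y'\in K+v_j$ with $v_i,v_j\in Q_v$. By Lemma~\ref{clique}, $\|v_i-v_j\|_K\le 2$, so $\|y-y'\|_K\le 1+2+1=4$. Equivalently,
$$
E-E=(S-S)+(K-K)\subseteq 2K+2K=4K .
$$
Since $E$ is compact, Brunn--Minkowski gives
$$
2^d\operatorname{vol}(E)\le\operatorname{vol}(E-E)\le\operatorname{vol}(4K)=4^d\operatorname{vol}(K).
$$
Hence $q_v\le\operatorname{vol}(E)/\operatorname{vol}(K)\le 2^d$. No common center, convex hull, Jung constant or mixed volumes are needed.
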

\begin{proof}
Fix $v\in I$ and let
$$
E_v=\bigcup_{v_i\in Q_v}A_i.
$$
Then
$$
q_v=\sum_{v_i\in Q_v}a_i=\sum_{v_i\in Q_v}\frac{\operatorname{vol}(A_i)}{\operatorname{vol}(K)}=\frac{\operatorname{vol}(E_v)}{\operatorname{vol}(K)},
$$
where the last equation follows from the pairwise disjointness of the sets $A_i$.

We claim that $E_v-E_v\subseteq4K$, where $E_v-E_v=\{u-w:u, w\in E_v\}$. For every $u, w\in E_v$, there exist
$v_i,v_j\in Q_v$ such that
$$
    u\in A_i\subseteq K+v_i
    \text{ and }
    w\in A_j\subseteq K+v_j.
$$
By Lemma \ref{clique}, the vertices $v_i$ and $v_j$ are adjacent unless they coincide. In either case,
$$
    \|v_i-v_j\|_K\leq 2.
$$
It follows from the triangle inequality that
\begin{align*}
    \|u-w\|_K
    &\leq\|u-v_i\|_K+\|v_i-v_j\|_K+\|v_j-w\|_K\\
    &\leq 1+2+1=4.
\end{align*}
Hence $E_v-E_v\subseteq4K$.

By the Brunn-Minkowski inequality (see, for example, \cite[Theorem 8.1.1]{MR936419}),
$$
\operatorname{vol}(E_v-E_v)\geq2^d\operatorname{vol}(E_v).
$$
Therefore
$$
\operatorname{vol}(E_v)\leq 2^{-d}\operatorname{vol}(E_v-E_v)\leq2^{-d}\operatorname{vol}(4K)=2^d\operatorname{vol}(K),
$$
and $q_v\leq2^d$.
\end{proof}
Summing the inequality in Lemma \ref{qvbound} over $v\in I$ and using
\eqref{eq3}, we obtain
\begin{equation}\label{eq4}
    P_1\leq 2^d\alpha.
\end{equation}
\subsection{An upper bound for $P_2$}
For every $v\in I$, define
$$
m_v=\sum_{\substack{v_i\in V\\v_i\in N[v]}}a_i=\sum_{\substack{v_i\in V\\v_i\in N[v]}}\frac{\operatorname{vol}(A_i)}{\operatorname{vol}(K)}=\frac{1}{\operatorname{vol}(K)}\operatorname{vol}\left(\bigsqcup_{\substack{v_i\in V\\v_i\in N[v]}}A_i\right).
$$
We have the following lemma.
\begin{lemma}\label{mxbound}
For every $v\in I$, $m_v\leq3^d$.
\end{lemma}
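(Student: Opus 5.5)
The bound follows from a containment argument, the same way as in Lemma \ref{qvbound}, but it is simpler because no Brunn--Minkowski step is needed. Fix $v\in I$ and set
$$
F_v=\bigsqcup_{\substack{v_i\in V\\ v_i\in N[v]}}A_i .
$$
The sets $A_i$ are pairwise disjoint, so $m_v=\operatorname{vol}(F_v)/\operatorname{vol}(K)$. It therefore suffices to show that $F_v$ lies inside a single translate of $3K$, namely $F_v\subseteq 3K+v$.

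To prove the containment, take any point $x\in F_v$. Then $x\in A_i\subseteq K+v_i$ for some $v_i\in N[v]$, so $\|x-v_i\|_K\le 1$. If $v_i=v$ we get $\|x-v\|_K\le 1$ directly. Otherwise $v_i$ is adjacent to $v$ in the intersection graph, and by the definition of $E$ this means $\|v_i-v\|_K\le 2$. The triangle inequality for the Minkowski functional of the symmetric convex body $K$ then gives
$$
\|x-v\|_K\le \|x-v_i\|_K+\|v_i-v\|_K\le 1+2=3 .
$$
Hence $x\in 3K+v$ in either case.

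Consequently $\operatorname{vol}(F_v)\le \operatorname{vol}(3K)=3^d\operatorname{vol}(K)$, which is exactly $m_v\le 3^d$.

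There is no real obstacle. The only points to watch are that the triangle inequality and the identity $\|y\|_K\le r\iff y\in rK$ both rely on $K$ being a symmetric convex body (compact, so that the infimum in the Minkowski functional is attained). The disjointness of the $A_i$ is what lets the sum of the $a_i$ be read as the volume of a single set. Note that the hypothesis $v\in I$ plays no role here; the bound holds for any vertex.
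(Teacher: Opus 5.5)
Your proof is correct and is essentially the paper's argument. You show that each $A_i$ with $v_i\in N[v]$ lies in $3K+v$ via the triangle inequality for $\|\cdot\|_K$, and then use disjointness of the $A_i$ to compare volumes. Your remark that $v\in I$ is never used is also accurate; the paper later relies on this implicitly when it applies the bound to vertices of the random independent set $\mathbf{X}$.
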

\begin{proof}
Fix $v\in I$. For every $v_i\in N[v]$, the vertices $v_i$ and $v$ are adjacent unless they coincide. In either case,
$$
    \|v_i-v\|_K\leq 2.
$$
If $u\in A_i$, then $u\in K+v_i$, and hence
$$
\|u-v\|_K\leq\|u-v_i\|_K+\|v_i-v\|_K\leq 1+2=3.
$$
Thus $A_i\subseteq 3K+v$ whenever $v_i\in N[v]$. Therefore
$$
m_v=\frac{1}{\operatorname{vol}(K)}\left(\bigsqcup_{\substack{v_i\in V\\v_i\in N[v]}}\operatorname{vol}(A_i)\right)\leq\frac{\operatorname{vol}(3K+v)}{\operatorname{vol}(K)}=3^d.
$$
\end{proof}
We now give an upper bound for $P_2$.
\begin{lemma}\label{p2bound}
$$
P_2\leq\frac{1}{2}\left(3^d\alpha-P_1\right).
$$
\end{lemma}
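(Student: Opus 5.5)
We double count the quantity $\sum_{v\in I} m_v$ by expanding each $m_v$ as a sum over vertices. By definition,
$$
\sum_{v\in I}m_v=\sum_{v\in I}\sum_{v_i\in N[v]}a_i=\sum_{i=1}^n a_i\,|N[v_i]\cap I|=\sum_{i=1}^n n(v_i)\,a_i,
$$
since $v_i\in N[v]$ if and only if $v\in N[v_i]$ (adjacency is symmetric, and $v_i=v$ is counted once via the closed neighborhood). Lemma \ref{mxbound} gives the upper bound $\sum_{v\in I}m_v\leq 3^d|I|=3^d\alpha$.

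For the lower bound, split the right-hand side according to the value of $n(v_i)$. By \eqref{eq2}, every vertex has $n(v_i)\geq 1$. The vertices with $n(v_i)=1$ contribute exactly $P_1$. The vertices with $n(v_i)\geq 2$ contribute at least $2\sum_{n(v_i)\geq2}a_i=2P_2$, using $a_i\geq 0$. Hence
$$
P_1+2P_2\leq\sum_{i=1}^n n(v_i)a_i\leq 3^d\alpha.
$$
Rearranging gives $P_2\leq\frac12\left(3^d\alpha-P_1\right)$.

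The only step needing care is the interchange of summation: one must check that each $a_i$ is counted exactly $|N[v_i]\cap I|$ times, including the case $v_i\in I$ itself. This holds because $N[\cdot]$ is the closed neighborhood, so $v_i\in N[v_i]$. Nothing else is an obstacle. Combining this lemma with \eqref{eq4} then gives $M=P_1+P_2\leq \frac12(3^d\alpha+P_1)\leq\frac12(3^d+2^d)\alpha$, which yields Theorem \ref{ruo} via \eqref{eq1}.
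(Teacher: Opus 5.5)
Your proposal is correct and is essentially the paper's argument: the paper double counts $\sum_{v\in I}(m_v-q_v)=\sum_{n(v_i)\geq 2}n(v_i)a_i\geq 2P_2$ and uses $\sum_{v\in I}q_v=P_1$ together with Lemma \ref{mxbound}. You double count $\sum_{v\in I}m_v=\sum_i n(v_i)a_i\geq P_1+2P_2$ directly, which is the same computation without the intermediate $q_v$.
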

\begin{proof}
For every $v\in I$, recall that
$$
m_v=\sum_{\substack{v_i\in V\\v_i\in N[v]}}a_i=\sum_{\substack{v_i\in V\\v\in N[v_i]}}a_i
$$
and
$$
q_v=\sum_{v_i\in Q_v}a_i=\sum_{\substack{v_i\in V\\N[v_i]\cap I=\{v\}}}a_i.
$$
So
$$
m_v-q_v=\sum_{\substack{v_i\in V\\N[v_i]\cap I\supsetneqq\{v\}}}a_i=\sum_{\substack{v_i\in V\\v\in N[v_i]\\|N[v_i]\cap I|\geq2}}a_i=\sum_{\substack{v_i\in V\\v\in N[v_i]\\n(v_i)\geq2}}a_i.
$$
Summing over $v\in I$, we obtain
\begin{align*}
   \sum_{v\in I}(m_v-q_v)
   &=\sum_{v\in I}\sum_{\substack{v_i\in V\\v\in N[v_i]\\n(v_i)\geq2}}a_i\\
   &=\sum_{\substack{v_i\in V\\n(v_i)\geq2}}\sum_{v\in N[v_i]\cap I}a_i\\
   &=\sum_{\substack{v_i\in V\\n(v_i)\geq2}}n(v_i)a_i\\
   &\geq\sum_{\substack{v_i\in V\\n(v_i)\geq2}}2a_i=2P_2.
\end{align*}
On the other hand, by Lemma \ref{mxbound}, $\sum_{v\in I}m_v\leq\sum_{v\in I}3^d=3^d\alpha$, while \eqref{eq3} yields $\sum_{v\in I}q_v=P_1$. Combining these yields
$$
P_2\leq\frac{1}{2}\sum_{v\in I}(m_v-q_v)\leq\frac{1}{2}\left(3^d\alpha-P_1\right).
$$
\end{proof}
\subsection{Proof of Theorem \ref{ruo}}
We are ready to prove Theorem \ref{ruo}.
\begin{proof}[Proof of Theorem \ref{ruo}]
By Lemma \ref{p2bound} and \eqref{eq4}, we have
$$
  M  =P_1+P_2 \leq\frac{1}{2}\left(3^d\alpha+P_1\right)\leq\frac{3^d+2^d}{2}\alpha.
$$
Recalling \eqref{eq1}, we have
$$
\operatorname{vol}(U)=M\operatorname{vol}(K)\leq\frac{3^d+2^d}{2}\alpha\operatorname{vol}(K).
$$
The independent set $I$ corresponds to $\alpha$ pairwise disjoint translates of $K$, whose total volume is $\alpha\operatorname{vol}(K)$. Therefore,
$$
\frac{\alpha\operatorname{vol}(K)}{\operatorname{vol}(U)}\geq\frac{2}{3^d+2^d}.
$$
Since this holds for every finite collection $\mathcal{C}$ of translates of $K$, it follows that $f(K)\geq\frac{2}{3^d+2^d}$.
\end{proof}
\section{The weighted hard-core model}
Let
$$
    B^d(x, r)=\{y\in\mathbb{R}^d:\|y-x\|\leq r\}
$$
be the closed Euclidean ball in $\mathbb{R}^d$ centered at $x$ with radius $r$, where $\|\cdot\|$ is the Euclidean norm. We denote by $\omega_d$ the volume of the unit ball. From now on, we assume that $\mathcal{C}=\{B^d(v_i,1):1\leq i\leq n\}$ is a collection of $n$ unit balls.
Recall that
$$
a_i=\frac{\operatorname{vol}(A_i)}{\omega_d}.
$$
We may delete vertices with $a_i=0$. This does not change $M$,
and the independence number of $G$ is at least the
independence number of the graph induced by the remaining vertices.
Therefore we also assume that $a_i>0$ for every $v_i\in V$.

Fix $\lambda>0$. For an independent set $I\in\mathcal{I}(G)$, define its weight by
$$
 w_\lambda(I)=\lambda^{|I|}\prod_{v_i\in I}a_i=\prod_{v_i\in I}(\lambda a_i).
$$
The \emph{weighted hard-core model} on $G$ with vertex fugacities $\{\lambda a_i\}_{1\leq i\leq n}$ is a random independent set $\mathbf{X}$ of $G$, where any independent set $I\in\mathcal{I}(G)$ is chosen with probability proportional to its weight; that is, the probability of choosing $I\in\mathcal{I}(G)$ is
$$
\mathbb{P}_\lambda(\mathbf{X}=I)= \frac{\lambda^{|I|}\prod_{v_i\in I}a_i}{Z_G(\lambda)},
$$
where
$$
Z_G(\lambda)= \sum_{I\in\mathcal{I}(G)}\lambda^{|I|}\prod_{v_i\in I}a_i
$$
is the weighted partition function.

Let $\rho=\mathbb{E}_\lambda|\mathbf{X}|$ and $\tilde{\rho}=\frac{\rho}{M}$. Every realization of $\mathbf{X}$ is an independent set; therefore
$$
\rho\leq\alpha(G).
$$
It follows from \eqref{eq1} that
\begin{equation}\label{eqp}
  \frac{\alpha(G)\omega_d}{\operatorname{vol}(U)}=\frac{\alpha(G)}{M}\geq \tilde{\rho}.
\end{equation}
It is therefore enough to obtain a uniform lower bound for $\tilde{\rho}$.
Moreover, we calculate
\begin{equation}\label{eq5}
\begin{split}
  \rho &=\mathbb{E}_\lambda|\mathbf{X}|  \\
   & =\sum_{I\in\mathcal{I}(G)}|I|\frac{\lambda^{|I|}\prod_{v_i\in I}a_i}{Z_G(\lambda)}\\
   &=\lambda\frac{Z'_G(\lambda)}{Z_G(\lambda)}\\
   &=\lambda\left(\log Z_G(\lambda)\right)'.
   \end{split}
\end{equation}
\section{A lower bound on the expected occupancy}
Independently of $\mathbf{X}$, choose a random vertex $\mathbf{R}\in V$ according to
\begin{equation}\label{eq6}
 \mathbb{P}(\mathbf{R}=v_i)=\frac{a_i}{M}.
\end{equation}
Given the random independent set $\mathbf{X}$ and the random vertex $\mathbf{R}$ as above, we define a random set
$$
\mathbf{Y}=\mathbf{Y}(\mathbf{X}, \mathbf{R}):=\mathbf{X}\cap(V\setminus N[\mathbf{R}]),
$$
and define the set of locally available vertices by
$$
\mathbf{T}(\mathbf{R}, \mathbf{Y})=\{u\in N[\mathbf{R}]:N(u)\cap \mathbf{Y}=\emptyset\}.
$$
If $\mathbf{R}=v_i$, then $v_i\in \mathbf{T}(\mathbf{R},\mathbf{Y})$, since no vertex of $\mathbf{Y}$ is adjacent to
$v_i$.

For a set $T\subseteq V$, write
$$
Z_T(\lambda)=\sum_{J\in\mathcal{I}(G[T])}\lambda^{|J|}\prod_{v_j\in J}a_j
$$
and
$$
\rho_T(\lambda)=\frac{\lambda Z_T'(\lambda)}{Z_T(\lambda)}.
$$
By \eqref{eq5}, $\rho_T(\lambda)$ is the expected cardinality of an independent set in the
weighted hard-core model on $G[T]$.
\begin{lemma}[Spatial Markov property]
\label{lem:spatial-markov}
Fix a vertex $v\in V$. Conditioned on $\mathbf{Y}=\mathbf{X}\cap(V\setminus N[v])$, the random set $\mathbf{X}\cap N[v]$ has the weighted hard-core
distribution on $G[\mathbf{T}(v,\mathbf{Y})]$, with vertex fugacities $\{\lambda a_i\}_{v_i\in \mathbf{T}(v,\mathbf{Y})}$. In particular, for every $J\in\mathcal{I}(G[\mathbf{T}(v,\mathbf{Y})])$,
\begin{equation}\label{eq7}
\mathbb{P}_\lambda\left(\mathbf{X}\cap N[v]=J|\mathbf{X}\cap(V\setminus N[v])=\mathbf{Y}\right)=\frac{\lambda^{|J|}\prod_{v_j\in J}a_j}{Z_{\mathbf{T}(v,\mathbf{Y})}(\lambda)}.
\end{equation}
\end{lemma}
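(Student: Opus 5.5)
The plan is to compute the conditional probability directly from the definition of $\mathbb{P}_\lambda$. We use only the product structure of the weights $w_\lambda(I)=\prod_{v_i\in I}(\lambda a_i)$ and the fact that independence is a pairwise constraint. Fix $v$ and fix a possible value $Y$ of $\mathbf{Y}$, that is, an independent set of $G$ contained in $V\setminus N[v]$ with $\mathbb{P}_\lambda(\mathbf{X}\cap(V\setminus N[v])=Y)>0$. Write $T=\mathbf{T}(v,Y)$. Every independent set $I$ with $I\cap(V\setminus N[v])=Y$ decomposes uniquely as $I=Y\sqcup J$ with $J=I\cap N[v]$.

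The first step is to characterize which $J\subseteq N[v]$ make $Y\sqcup J$ independent in $G$. Since $Y$ is already independent, this happens exactly when $J$ is independent and no vertex of $J$ has a neighbour in $Y$. In other words, $J\subseteq\{u\in N[v]: N(u)\cap Y=\emptyset\}=T$ and $J\in\mathcal{I}(G[T])$. So the map $J\mapsto Y\sqcup J$ is a bijection from $\mathcal{I}(G[T])$ onto the independent sets $I$ of $G$ with $I\cap(V\setminus N[v])=Y$.

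The second step factorizes the weight as $w_\lambda(Y\sqcup J)=w_\lambda(Y)\cdot\lambda^{|J|}\prod_{v_j\in J}a_j$, because $Y$ and $J$ are disjoint. Summing over the bijection gives
\begin{equation*}
\mathbb{P}_\lambda\left(\mathbf{X}\cap(V\setminus N[v])=Y\right)=\frac{w_\lambda(Y)\,Z_T(\lambda)}{Z_G(\lambda)}.
\end{equation*}
Note that $Z_T(\lambda)\geq1>0$ because of the empty set, so this quantity is positive whenever $Y$ is independent. For $J\in\mathcal{I}(G[T])$ we have $\mathbb{P}_\lambda(\mathbf{X}=Y\sqcup J)=w_\lambda(Y)\lambda^{|J|}\prod_{v_j\in J}a_j/Z_G(\lambda)$. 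Dividing the two expressions, the factors $w_\lambda(Y)$ and $Z_G(\lambda)$ cancel, and we obtain \eqref{eq7}. For $J\notin\mathcal{I}(G[T])$ the conditional probability is zero, which matches the hard-core distribution on $G[T]$. The constraint $J\subseteq N[v]$ is automatic since $T\subseteq N[v]$.

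There is no real obstacle; the only point needing care is the first step. One must check that the availability condition $N(u)\cap Y=\emptyset$ captures exactly the edges between $J$ and $Y$. In particular, $v$ itself is always available, so $T$ is nonempty. The positivity $a_i>0$ is not needed, except to ensure that weights are well defined and that conditioning is on events of positive probability.
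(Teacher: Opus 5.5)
Your proof is correct and follows essentially the same route as the paper: the bijection $J\mapsto Y\sqcup J$ from $\mathcal{I}(G[T])$ onto the independent sets of $G$ with exterior part $Y$, followed by factoring $w_\lambda(Y)$ out of the weight. The only difference is that you compute the marginal $\mathbb{P}_\lambda(\mathbf{X}\cap(V\setminus N[v])=Y)=w_\lambda(Y)Z_T(\lambda)/Z_G(\lambda)$ explicitly, which spells out the paper's ``cancels from the normalization'' step.
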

\begin{proof}
Fix $v\in V$. Suppose $\mathbf{X}\in \mathcal{I}(G)$ and $\mathbf{Y}=\mathbf{X}\cap(V\setminus N[v])$. Then $\mathbf{X}\setminus \mathbf{Y}$ is an independent set and $N(u)\cap \mathbf{Y}=\emptyset$ for every $u\in \mathbf{X}\setminus \mathbf{Y}$. So $\mathbf{X}\setminus \mathbf{Y}\in\mathcal{I}(G[\mathbf{T}(v,\mathbf{Y})])$. Conversely, if $\mathbf{Y}\in\mathcal{I}(G[V\setminus N[v]])$ and $J\in\mathcal{I}(G[\mathbf{T}(v,\mathbf{Y})])$, then $\mathbf{Y}\cup J\in \mathcal{I}(G)$. Thus, conditioned on $\mathbf{Y}=\mathbf{X}\cap(V\setminus N[v])$, the only remaining constraint is that the
vertices inside $\mathbf{T}(v,\mathbf{Y})$ form an independent set.

So we have
\begin{align*}
  &\mathbb{P}_\lambda\left(\mathbf{X}\cap N[v]=J|\mathbf{X}\cap(V\setminus N[v])=\mathbf{Y}\right)\\
 =&\frac{\mathbb{P}_\lambda\left(\mathbf{X}\cap N[v]=J, \mathbf{X}\cap(V\setminus N[v])=\mathbf{Y}\right)}{\mathbb{P}_\lambda\left(\mathbf{X}\cap(V\setminus N[v])=\mathbf{Y}\right)}\\
 =&\frac{\mathbb{P}_\lambda\left(\mathbf{X}=\mathbf{Y}\cup J\right)}{\mathbb{P}_\lambda\left(\mathbf{X}\cap(V\setminus N[v])=\mathbf{Y}\right)}\\
 =&\frac{\lambda^{|\mathbf{Y}\cup J|}\prod_{v_i\in \mathbf{Y}\cup J}a_i}{Z_G(\lambda)\cdot\mathbb{P}_\lambda\left(\mathbf{X}\cap(V\setminus N[v])=\mathbf{Y}\right)}\\
 =&\left(\frac{\lambda^{|\mathbf{Y}|}\prod_{v_k\in \mathbf{Y}}a_k}{Z_G(\lambda)\cdot\mathbb{P}_\lambda\left(\mathbf{X}\cap(V\setminus N[v])=\mathbf{Y}\right)}\right)\left(\lambda^{|J|}\prod_{v_j\in J}a_j\right).
\end{align*}
Conditioned on $\mathbf{Y}$, the first factor is constant and cancels from the normalization. This gives
\eqref{eq7}.
\end{proof}
\begin{lemma}\label{lem:first-local}
For the random set $\mathbf{T}=\mathbf{T}(\mathbf{R}, \mathbf{Y})$,
\begin{equation}\label{eq8}
  \tilde{\rho}=\lambda\cdot\mathbb{E}\left(\frac{1}{Z_\mathbf{T}(\lambda)}\right).
\end{equation}
Consequently, if
$$
z=\mathbb{E}\log Z_\mathbf{T}(\lambda),
$$
then
\begin{equation}\label{eq9}
  \tilde{\rho}\geq\lambda e^{-z}.
\end{equation}
Both expectations above are with respect to the random set $\mathbf{T}$ generated by $\mathbf{X}$ and $\mathbf{R}$.
\end{lemma}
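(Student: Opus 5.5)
We compute $\tilde\rho$ by decomposing $\rho$ vertex by vertex and then averaging over $\mathbf R$. By \eqref{eq5},
$$\rho=\sum_{i=1}^n\mathbb{P}_\lambda(v_i\in\mathbf{X}),$$
so
$$\tilde\rho=\frac{1}{M}\sum_i\mathbb{P}_\lambda(v_i\in\mathbf X)=\sum_i\frac{a_i}{M}\cdot\frac{\mathbb{P}_\lambda(v_i\in\mathbf X)}{a_i}.$$
This is legitimate because we assumed $a_i>0$ for every $i$. The weights $a_i/M$ are exactly the law \eqref{eq6} of $\mathbf R$. Hence it suffices to show, for each fixed $v_i$,
$$\mathbb{P}_\lambda(v_i\in\mathbf X)=\lambda a_i\,\mathbb{E}\left[\frac{1}{Z_{\mathbf T(v_i,\mathbf Y)}(\lambda)}\right],$$
where $\mathbf Y=\mathbf X\cap(V\setminus N[v_i])$. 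Averaging this identity over $\mathbf R$, which is independent of $\mathbf X$, then gives \eqref{eq8}.

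To prove the identity, fix $v=v_i$ and condition on $\mathbf Y$. By Lemma \ref{lem:spatial-markov}, the set $\mathbf X\cap N[v]$ has the weighted hard-core distribution on $G[\mathbf T(v,\mathbf Y)]$. We already know $v\in\mathbf T(v,\mathbf Y)$. Moreover $v$ is adjacent to every other vertex of $\mathbf T(v,\mathbf Y)\subseteq N[v]$. So the only independent set of $G[\mathbf T]$ containing $v$ is $\{v\}$, and by \eqref{eq7},
$$\mathbb{P}_\lambda\left(v\in\mathbf X\mid\mathbf Y\right)=\frac{\lambda a_i}{Z_{\mathbf T(v,\mathbf Y)}(\lambda)}.$$
Taking expectation over $\mathbf Y$ yields the identity. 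Substituting into the sum, the factor $a_i$ cancels against the $1/a_i$ and leaves $\lambda\,\mathbb E[1/Z_{\mathbf T}(\lambda)]$ with respect to the joint law of $(\mathbf X,\mathbf R)$. This is \eqref{eq8}.

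For \eqref{eq9}, note that $Z_{\mathbf T}(\lambda)\ge1$ is positive, since the empty set contributes $1$. Write $1/Z_{\mathbf T}=e^{-\log Z_{\mathbf T}}$. The function $x\mapsto e^{-x}$ is convex, so Jensen's inequality gives $\mathbb E\,e^{-\log Z_{\mathbf T}}\ge e^{-\mathbb E\log Z_{\mathbf T}}=e^{-z}$. Multiplying by $\lambda$ gives $\tilde\rho\ge\lambda e^{-z}$.

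The main point requiring care is the cancellation of $a_i$. It depends on choosing the law of $\mathbf R$ proportional to $a_i$ to match the fugacity $\lambda a_i$ of $v_i$, together with the observation that $v$ is adjacent to all of $\mathbf T\setminus\{v\}$. Everything else is routine, since the state space is finite and all quantities are finite.
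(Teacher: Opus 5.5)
Your proof is correct and is essentially the paper's argument: decompose $\rho$ over vertices, weight by the law of $\mathbf R$, evaluate each term via the spatial Markov property on $N[v_i]$, and finish with Jensen. The only cosmetic difference is the choice of $J$ in \eqref{eq7}. The paper first uses the bijection $I\mapsto I\cup\{v_i\}$ to get $\mathbb{P}_\lambda(v_i\in\mathbf X)=\lambda a_i\,\mathbb{P}_\lambda(\mathbf X\cap N[v_i]=\emptyset)$ and then applies \eqref{eq7} with $J=\emptyset$, whereas you apply \eqref{eq7} directly with $J=\{v_i\}$.
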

\begin{proof}
Conditioned on $\mathbf{R}=v_i\in V$, let
$$
\mathcal{S}_i=\{I\in \mathcal{I}(G):I\cap N[v_i]=\emptyset\},
$$
and
$$
\mathcal{S}'_i=\{I\in \mathcal{I}(G):v_i\in I\}.
$$
Then the map $f:\mathcal{S}_i\rightarrow\mathcal{S}'_i$ defined by $f(I)=I\cup\{v_i\}$ is a bijection. Moreover, according to the weighted hard-core distribution, for every $I\in\mathcal{S}_i$,
$$
\mathbb{P}_\lambda(\mathbf{X}=f(I))=\lambda a_i\mathbb{P}_\lambda(\mathbf{X}=I).
$$
If we write
$$
b_i=\mathbb{P}_\lambda(\mathbf{X}\in\mathcal{S}_i)=\mathbb{P}_\lambda(\mathbf{X}\cap N[v_i]=\emptyset),
$$
then
$$
\mathbb{P}_\lambda(v_i\in \mathbf{X})=\mathbb{P}_\lambda(\mathbf{X}\in\mathcal{S}'_i)=\lambda a_ib_i.
$$
Summing over $v_i\in V$ gives
$$
 \rho
 =
 \sum_{v_i\in V}\mathbb{P}_\lambda(v_i\in \mathbf{X})
 =
 \lambda\sum_{v_i\in V}a_ib_i
 =
 \lambda\sum_{i=1}^na_ib_i.
$$
After dividing by $M$ and using
\eqref{eq6}, we obtain
$$
 \tilde{\rho}
 =
 \lambda\cdot\mathbb{E}_\mathbf{R} b_\mathbf{R},
$$
where $\mathbb{E}_\mathbf{R}$ is the expectation with respect to the random vertex $\mathbf{R}$.

By Lemma \ref{lem:spatial-markov}, conditioned on $\mathbf{R}=v_i$ and $\mathbf{Y}$, the probability that $\mathbf{X}\cap N[v_i]=\emptyset$ is
$$
\frac{1}{Z_\mathbf{T}(\lambda)}.
$$
Thus
$$
 \tilde{\rho}
 =
 \lambda\cdot\mathbb{E}_\mathbf{R} b_\mathbf{R}
 =
 \lambda\cdot\mathbb{E}_\mathbf{R}\left(\mathbb{P}_\lambda(\mathbf{X}\cap N[\mathbf{R}]=\emptyset)\right)
 =
 \lambda\mathbb{E}\left(\frac{1}{Z_\mathbf{T}(\lambda)}\right).
$$

Finally, the function $x\mapsto e^{-x}$ is convex, so Jensen's
inequality gives
$$
 \mathbb{E}\left(\frac1{Z_\mathbf{T}(\lambda)}\right)
 =
 \mathbb{E} e^{-\log Z_\mathbf{T}(\lambda)}
 \geq
 e^{-\mathbb{E}\log Z_\mathbf{T}(\lambda)}
 =
 e^{-z}.
$$
Substituting into \eqref{eq8} proves
\eqref{eq9}.
\end{proof}
\begin{lemma}\label{secondlocal}
For the random set $\mathbf{T}=\mathbf{T}(\mathbf{R},\mathbf{Y})$,
\begin{equation}\label{eq10}
  \tilde{\rho}\geq 3^{-d}\mathbb{E}\rho_\mathbf{T}(\lambda).
\end{equation}
\end{lemma}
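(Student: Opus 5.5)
The plan is to compute $\mathbb{E}\rho_\mathbf{T}(\lambda)$ exactly by a double-counting (exchange of summation) argument, and then bound it using the geometric estimate of Lemma \ref{mxbound}. This is the second ``local'' identity in the Jenssen--Joos--Perkins framework. Here the weights $a_i/M$ of $\mathbf{R}$ are what make the final count come out in terms of volumes.

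\emph{Step 1.} By Lemma \ref{lem:spatial-markov}, conditioned on $\mathbf{R}=v_i$ and on $\mathbf{Y}$, the set $\mathbf{X}\cap N[v_i]$ is distributed as the weighted hard-core model on $G[\mathbf{T}]$. Its conditional expected size is therefore $\rho_\mathbf{T}(\lambda)$, by the remark after the definition of $\rho_T$ (via \eqref{eq5}). Taking expectations over $\mathbf{Y}$, and using that $\mathbf{R}$ is independent of $\mathbf{X}$, gives
$$
\mathbb{E}\rho_\mathbf{T}(\lambda)=\mathbb{E}\,|\mathbf{X}\cap N[\mathbf{R}]|=\sum_{i=1}^n\frac{a_i}{M}\,\mathbb{E}_\lambda|\mathbf{X}\cap N[v_i]|.
$$

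\emph{Step 2.} Swap the order of summation. We have $\sum_i a_i|\mathbf{X}\cap N[v_i]|=\sum_{u\in\mathbf{X}}\sum_{v_i\in N[u]}a_i$, because $u\in N[v_i]$ if and only if $v_i\in N[u]$. The inner sum is exactly $m_u=\sum_{v_i\in N[u]}a_i$, the normalized volume of $\bigsqcup_{v_i\in N[u]}A_i$.

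\emph{Step 3.} Bound $m_u$. The proof of Lemma \ref{mxbound} never used that $v\in I$. It only used that each $A_i$ with $v_i\in N[v]$ lies in $3K+v$ and that the $A_i$ are disjoint. Hence $m_u\le 3^d$ for every $u\in V$, and in particular for $K=B^d$. Therefore
$$
\mathbb{E}\rho_\mathbf{T}(\lambda)\le\frac{1}{M}\,\mathbb{E}_\lambda\sum_{u\in\mathbf{X}}3^d=\frac{3^d\rho}{M}=3^d\tilde\rho,
$$
which is \eqref{eq10}.

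The only point needing care is Step 1: I must justify that averaging the conditional hard-core expectation over $\mathbf{Y}$ recovers $\mathbb{E}|\mathbf{X}\cap N[\mathbf{R}]|$. This is just the tower property combined with the independence of $\mathbf{R}$ and $\mathbf{X}$. Everything else is bookkeeping.
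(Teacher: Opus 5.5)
Your proposal is correct and follows the paper's proof step for step: the spatial Markov property plus the tower property gives $\mathbb{E}\rho_\mathbf{T}(\lambda)=\mathbb{E}|\mathbf{X}\cap N[\mathbf{R}]|$, exchanging sums gives $\frac{1}{M}\mathbb{E}_\lambda\sum_{v\in\mathbf{X}}m_v$, and $m_v\le 3^d$ finishes the argument. Your remark that the proof of Lemma~\ref{mxbound} never uses $v\in I$ is a worthwhile clarification, since the paper applies that lemma to vertices of $\mathbf{X}$ without comment.
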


\begin{proof}
By the definition of $\rho_\mathbf{T}(\lambda)$ and Lemma \ref{lem:spatial-markov}, we have
$$
 \rho_\mathbf{T}(\lambda)
 =
 \mathbb{E}\left(|\mathbf{X}\cap N[\mathbf{R}]||\mathbf{R},\mathbf{Y}\right).
$$
Taking expectations,
$$
 \mathbb{E}\rho_\mathbf{T}(\lambda)
 =
 \mathbb{E}|\mathbf{X}\cap N[\mathbf{R}]|.
$$
Using the distribution of $\mathbf{R}$ and then exchanging the two finite
sums, we obtain
\begin{align*}
 \mathbb{E}|\mathbf{X}\cap N[\mathbf{R}]|
 &=
 \frac{1}{M}
 \mathbb{E}_\lambda\left(\sum_{v_i\in V}a_i|\mathbf{X}\cap N[v_i]|\right)\\
 &=
 \frac{1}{M}
 \mathbb{E}_\lambda\left(\sum_{v_i\in V}\sum_{\substack{v\in \mathbf{X}\\v\in N[v_i]}}a_i\right)\\
 &=
 \frac{1}{M}
 \mathbb{E}_\lambda\left(\sum_{v\in \mathbf{X}}\sum_{\substack{v_i\in V\\v_i\in N[v]}}a_i\right),
\end{align*}
where the last equality follows from the fact that adjacency is symmetric.

Recalling the definition of $m_v$ and Lemma \ref{mxbound}, we conclude that
\begin{align*}
 \mathbb{E}\rho_\mathbf{T}(\lambda)
 &=\mathbb{E}|\mathbf{X}\cap N[\mathbf{R}]|\\
 &=\frac{1}{M}\mathbb{E}_\lambda\left(\sum_{v\in \mathbf{X}}\sum_{\substack{v_i\in V\\v_i\in N[v]}}a_i\right)\\
 &=\frac{1}{M}\mathbb{E}_\lambda\left(\sum_{v\in \mathbf{X}}m_v\right)\\
 &\leq\frac{1}{M}\mathbb{E}_\lambda\left(\sum_{v\in \mathbf{X}}3^d\right)\\
 &=\frac{3^d}{M}\mathbb{E}_\lambda|\mathbf{X}|=3^d\tilde{\rho}.
\end{align*}
Rearranging proves \eqref{eq10}.
\end{proof}

Set
$$
 \sigma=1+\sqrt3.
$$
We have the following geometric estimate.
\begin{lemma}[Weighted lens lemma]\label{lem:weighted-lens}
Let $x_1, x_2, \ldots, x_n\in B^d(0,2)$. Suppose that $C_1, C_2, \ldots, C_n$ are pairwise disjoint measurable sets satisfying
$$
 C_i\subseteq B^d(x_i,1).
$$
Define
$$
 c_i=\frac{\operatorname{vol}(C_i)}{\omega_d},
 \qquad
 g=\sum_{i=1}^nc_i,
$$
and
$$
 g_i
 =
 \sum_{\substack{1\leq j\leq n\\\|x_i-x_j\|\leq2}}c_j.
$$
If $g>0$, then
$$
 \frac{\sum_{i=1}^nc_ig_i}{g}
 \leq
 2(1+\sqrt3)^d
 =
 2\sigma^d.
$$
\end{lemma}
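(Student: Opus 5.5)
The plan is to break the symmetry of the double sum by comparing the distances of the centres from the origin, and then to bound each one-sided inner sum with a single ball of radius $1+\sqrt3$. First note that
$$
\sum_{i=1}^n c_ig_i=\sum_{\substack{(i,j)\\ \|x_i-x_j\|\le 2}} c_ic_j ,
$$
a symmetric sum over ordered pairs, including the diagonal. For each such pair, assign it to the index whose centre is farther from the origin, breaking ties arbitrarily; if $\|x_i\|=\|x_j\|$ the pair may be charged to both. This gives
$$
\sum_{i}c_ig_i\le 2\sum_{i=1}^n c_i\, h_i,\qquad h_i=\sum_{\substack{j:\ \|x_i-x_j\|\le 2\\ \|x_j\|\le\|x_i\|}} c_j .
$$
It therefore suffices to show $h_i\le\sigma^d$ for every $i$. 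Summing then gives $\sum_i c_ig_i\le 2\sigma^d g$, as required.

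\textbf{Reducing $h_i\le\sigma^d$ to a lens estimate.} Fix $i$ and set $r=\|x_i\|\le 2$. Every $j$ counted in $h_i$ has centre $x_j$ in the lens
$$
L=B^d(0,r)\cap B^d(x_i,2).
$$
I will show that $L\subseteq B^d(z,\sqrt3)$ for a suitable point $z$. Granting this, each $C_j$ counted in $h_i$ lies in $B^d(x_j,1)\subseteq B^d(z,1+\sqrt3)$. Since the $C_j$ are pairwise disjoint, their total volume is at most $\sigma^d\omega_d$, which is exactly $h_i\le\sigma^d$.

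\textbf{The lens estimate (main step).} Parametrize points along the direction of $x_i$ by $t$, so the origin is at $t=0$ and $x_i$ is at $t=r$. The two bounding spheres meet in the hyperplane where $r^2-t^2=4-(r-t)^2$, that is, $t_0=r-2/r$.

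If $t_0\le 0$, which happens when $r\le\sqrt2$, the hyperplane does not lie strictly between the centres. I expect the lens is then controlled by $B^d(0,r)$ alone, so take $z=0$ and the radius is $r\le\sqrt2<\sqrt3$. More carefully, $L\subseteq B^d(0,r)$ holds trivially, so nothing further is needed.

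If $t_0>0$, take $z=t_0\,x_i/r$. The intersection sphere has radius
$$
s=\sqrt{r^2-t_0^2}=\sqrt{4-4/r^2}\le\sqrt3 \quad\text{since } r\le 2 .
$$
I then need the standard fact that a lens whose intersection hyperplane lies between the two centres is contained in the ball of radius $s$ about $z$. To verify it, split $L$ by the hyperplane $t=t_0$. On the side $t\ge t_0$ the points lie in $B^d(0,r)$ with $t\ge t_0$. For such a point $y$, write $y=z+w$ and compute
$$
\|y-z\|^2=\|y\|^2-2t_0t+t_0^2\le r^2-t_0^2=s^2 ,
$$
using $t\ge t_0>0$. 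The other side is symmetric, using $B^d(x_i,2)$ and the fact that $r-t_0=2/r>0$. This elementary verification is the only genuinely geometric step, and it is where the constant $\sqrt3$, coming from the extreme case $r=2$, enters.
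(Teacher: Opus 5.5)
Your proposal is correct and follows the paper's proof essentially step for step: the same ordering of centres by distance from the origin to get the factor $2$, the same lens $B^d(0,r)\cap B^d(x_i,2)$, and the same bound via a ball of radius $1+\sqrt3$ together with disjointness of the $C_j$. The one difference is that you prove the lens-in-a-ball-of-radius-$\sqrt3$ fact yourself (case split at $r=\sqrt2$, centre $z=t_0x_i/r$), whereas the paper just cites the proof of Lemma 10 of Jenssen, Joos and Perkins; your verification of that fact is correct.
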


\begin{proof}
Put
$$
 r_i=\|x_i\|
$$
and
\begin{equation}\label{eq11}
 D
 =
 \sum_{i=1}^nc_i
 \sum_{\substack{1\leq j\leq n\\
                 \|x_i-x_j\|\leq2\\
                 r_j\leq r_i}}
 c_j.
\end{equation}

We first claim that
\begin{equation}\label{eq12}
  \sum_{i=1}^nc_ig_i\le2D.
\end{equation}
Indeed, the diagonal terms contribute $\sum_{i=1}^nc_i^2$ to the left-hand side and $2\sum_{i=1}^nc_i^2$ to the right-hand side, since
\(r_i\le r_i\). For distinct $i,j$ with $\|x_i-x_j\|\leq2$, the two ordered pairs $(i, j)$ and $(j, i)$ contribute a total of $2c_ic_j$ to $\sum_{i=1}^n c_ig_i$. Since either
$r_j\leq r_i$ or $r_i\leq r_j$, the quantity $D$ contains at least one contribution $c_ic_j$. Thus the corresponding contribution to $2D$ dominates these two ordered terms. This proves \eqref{eq12}.

For a point $x\in B^d(0,2)$, let $r=\|x\|$ and define a closed lens
$$
 L(x, r):=B^d(x,2)\cap B^d(0,r).
$$
If $x_i$ and $x_j$ satisfy $\|x_i-x_j\|\leq2$ and $r_j\leq r_i$, then $x_j\in L(x_i, r_i)$. Since $C_j\subseteq B^d(x_j,1)$, we have
$$
 \bigcup_{\substack{j:\|x_i-x_j\|\leq2\\r_j\le r_i}}C_j
 \subseteq
 L(x_i,r_i)+B^d(0,1),
$$
where $+$ denotes Minkowski addition. Pairwise disjointness of the sets $C_j$ therefore gives
$$
 \sum_{\substack{j:\|x_i-x_j\|\leq2\\r_j\le r_i}}c_j
 \leq
 \frac{\operatorname{vol}\left(L(x_i,r_i)+B^d(0,1)\right)}{\omega_d}.
$$

For every $x\in B^d(0,2)$ with norm $r$, the lens $L(x, r)$ is contained in a ball of radius $\sqrt3$; see, for example, the proof of \cite[Lemma 10]{MR3898718}. Thus for every $i$,
$$
\operatorname{vol}\left(L(x_i,r_i)+B^d(0,1)\right)\leq(1+\sqrt3)^d\omega_d=\sigma^d\omega_d,
$$
and
$$
 \sum_{\substack{j:\|x_i-x_j\|\leq2\\r_j\le r_i}}c_j
 \leq
 \sigma^d.
$$
Substituting into \eqref{eq11} gives
$$
 D\leq\sigma^d\sum_{i=1}^nc_i=\sigma^dg.
$$
Finally, \eqref{eq12} yields
$$
 \sum_{i=1}^nc_ig_i
 \leq
 2D
 \leq
 2\sigma^dg.
$$
Dividing by $g$ proves the lemma.
\end{proof}

\begin{lemma}\label{lem:local-partition}
For every realization $T$ of $\mathbf{T}(\mathbf{R},\mathbf{Y})$, we have
$$
 \rho_T(\lambda)
 \geq
 e^{-2\lambda\sigma^d}\log Z_T(\lambda).
$$
\end{lemma}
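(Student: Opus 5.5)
The plan is to bound both sides of the inequality through the same quantity
$$
g=\sum_{v_j\in T}a_j .
$$
We show $\log Z_T(\lambda)\leq\lambda g$ from above, and $\rho_T(\lambda)\geq\lambda g\,e^{-2\lambda\sigma^d}$ from below. The second bound comes from a local occupancy estimate, Jensen's inequality, and the weighted lens lemma (Lemma \ref{lem:weighted-lens}). Combining the two gives the statement. Since $a_i>0$ for all $i$ and every realization $T$ contains $\mathbf{R}$, we have $g>0$.

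\emph{Upper bound on $\log Z_T$.} Every independent set of $G[T]$ is a subset of $T$, so
$$
Z_T(\lambda)\leq\prod_{v_j\in T}(1+\lambda a_j).
$$
Using $\log(1+x)\leq x$, this gives $\log Z_T(\lambda)\leq\lambda\sum_{v_j\in T}a_j=\lambda g$.

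\emph{Lower bound on $\rho_T$.} Let $\mathbf{X}_T$ be the weighted hard-core model on $G[T]$, and for $u=v_i\in T$ write $N_T[u]=N[u]\cap T$. The bijection $I\mapsto I\cup\{u\}$ from the proof of Lemma \ref{lem:first-local} gives
$$
\mathbb{P}(u\in\mathbf{X}_T)=\lambda a_i\,\mathbb{P}(\mathbf{X}_T\cap N_T[u]=\emptyset).
$$
Apply the spatial Markov property (Lemma \ref{lem:spatial-markov}) inside $G[T]$, conditioning on $\mathbf{X}_T\setminus N_T[u]$. The conditional probability that $N_T[u]$ is unoccupied is $1/Z_{T'}(\lambda)$ for some $T'\subseteq N_T[u]$. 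Since $Z_{T'}(\lambda)\leq Z_{N_T[u]}(\lambda)$, we get
$$
\mathbb{P}(\mathbf{X}_T\cap N_T[u]=\emptyset)\geq\prod_{v_j\in N_T[u]}(1+\lambda a_j)^{-1}\geq e^{-\lambda g_i},
\qquad g_i=\sum_{v_j\in N_T[u]}a_j .
$$
Summing over $u\in T$ and applying Jensen's inequality to the convex function $e^{-x}$, with probability weights $a_i/g$, gives
$$
\rho_T(\lambda)\geq\lambda\sum_{v_i\in T}a_ie^{-\lambda g_i}\geq\lambda g\exp\left(-\lambda\frac{\sum_{v_i\in T}a_ig_i}{g}\right).
$$

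\emph{Applying the weighted lens lemma.} This is the step to check carefully. Translate so that the realized value of $\mathbf{R}$ is the origin. Every $v_i\in T\subseteq N[\mathbf{R}]$ then satisfies $\|v_i\|\leq2$, so the centers lie in $B^d(0,2)$. Take $C_i=A_i$; these sets are pairwise disjoint and $A_i\subseteq B^d(v_i,1)$, so the hypotheses of Lemma \ref{lem:weighted-lens} hold with $c_i=a_i$. Adjacency in $G$ is exactly the condition $\|v_i-v_j\|\leq2$, and the diagonal term $j=i$ is included in both definitions. Hence our $g_i$ coincides with the lemma's $g_i$ computed over the points of $T$. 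The lemma therefore gives
$$
\frac{\sum_{v_i\in T}a_ig_i}{g}\leq2\sigma^d ,
$$
and so $\rho_T(\lambda)\geq\lambda g\,e^{-2\lambda\sigma^d}\geq e^{-2\lambda\sigma^d}\log Z_T(\lambda)$.
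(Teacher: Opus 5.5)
Your proposal is correct and follows the paper's proof essentially step for step. The steps match: the bound $\log Z_T(\lambda)\le\lambda\sum_{v_j\in T}a_j$ via $Z_T(\lambda)\le\prod_{v_j\in T}(1+\lambda a_j)$, the occupancy identity with the spatial Markov bound $e^{-\lambda g_i}$ on the probability that $N_T[u]$ is unoccupied, Jensen with weights $a_i/g$, and the weighted lens lemma applied to the centers in $T\subseteq N[\mathbf{R}]$ (your explicit translation of $\mathbf{R}$ to the origin just spells out a step the paper leaves implicit).
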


\begin{proof}
Let $T$ be a realization of $\mathbf{T}(\mathbf{R},\mathbf{Y})$, arising from $\mathbf{X}=I\in\mathcal{I}(G)$ and $\mathbf{R}=v\in V$.
Put
$$
 H=G[T].
$$
For $v_i\in T$, define
$$
 m_{H, i}
 =
 \sum_{v_j\in N_H[v_i]}a_j,
$$
and let
$$
 m_H=\sum_{v_i\in T}a_i.
$$
Since $v\in T$ and all vertex weights are positive, we have $m_H>0$.

Let $\mathbf{J}$ be an independent set drawn from the weighted hard-core
model on $H$, and define
$$
 b_{H, i}
 =
 \mathbb{P}_H(\mathbf{J}\cap N_H[v_i]=\emptyset),
$$
where $\mathbb{P}_H$ is the probability distribution of the weighted hard-core
model on $H$. The argument used in the proof of Lemma \ref{lem:first-local} shows that
$$
 \mathbb{P}_H(v_i\in \mathbf{J})
 =
 \lambda a_ib_{H, i},
$$
and
\begin{equation}\label{eq13}
 \rho_T(\lambda)
 =
 \lambda\sum_{v_i\in T}a_ib_{H, i}.
\end{equation}

We claim that
\begin{equation}\label{eq14}
  b_{H, i}\geq e^{-\lambda m_{H, i}}.
\end{equation}
To see this, conditioned on the configuration outside $N_H[v_i]$, the available vertices inside $N_H[v_i]$ form
some set $K_i\subseteq N_H[v_i]$. By Lemma \ref{lem:spatial-markov}, the conditional probability that
none of the vertices in $K_i$ is occupied equals
$$
 \frac{1}{Z_{K_i}(\lambda)}.
$$
Since the weighted partition function is bounded by the weighted generating
function of all subsets, we have
\begin{align*}
 Z_{K_i}(\lambda)&=\sum_{J\in\mathcal{I}(H[K_i])}\prod_{v_j\in J}\left(\lambda a_j\right)\\
 &\leq
 \sum_{S\subseteq K_i}\prod_{v_j\in S}\left(\lambda a_j\right)\\
 &=
 \prod_{v_j\in K_i}(1+\lambda a_j)\\
 &\leq
 \prod_{v_j\in K_i}\exp\left(\lambda a_j\right)\\
 &=
 \exp\left(\sum_{v_j\in K_i}\lambda a_j\right)\\
 &\le
 e^{\lambda m_{H, i}}.
\end{align*}
Thus the conditional probability that
none of the vertices in $K_i$ is occupied is at least
\(e^{-\lambda m_{H, i}}\). Averaging over the exterior configuration
proves \eqref{eq14}.

Substituting \eqref{eq14} into
\eqref{eq13} gives
$$
 \rho_T(\lambda)
 \ge
 \lambda\sum_{v_i\in T}a_ie^{-\lambda m_{H, i}}.
$$
Applying Jensen's inequality to the convex function
$x\mapsto e^{-\lambda x}$, with normalized weights $a_i/m_H$, we obtain
\begin{align}\label{eq15}
 \rho_T(\lambda)
 &\geq
 \lambda m_H
 \sum_{v_i\in T}\frac{a_i}{m_H}e^{-\lambda m_{H, i}}\nonumber\\
 &\ge
 \lambda m_H
 \exp\left(
   -\lambda
   \frac{\sum_{v_i\in T}a_im_{H, i}}{m_H}
 \right).
\end{align}

Since $T\subseteq N[v]$, we have $\|v_i-v\|\leq2$ for every $v_i\in T$. Lemma~\ref{lem:weighted-lens} therefore gives
$$
 \frac{\sum_{v_i\in T}a_im_{H, i}}{m_H}
 \leq
 2\sigma^d.
$$
Substituting into \eqref{eq15} yields
\begin{equation}\label{eq16}
\rho_T(\lambda)
 \geq
 \lambda m_H e^{-2\lambda\sigma^d}.
\end{equation}

Finally,
\begin{align*}
 Z_T(\lambda)
 &\leq
 \prod_{v_i\in T}(1+\lambda a_i)\\
 &\leq
 \exp\left(\lambda\sum_{v_i\in T}a_i\right)\\
 &=
 e^{\lambda m_H}.
\end{align*}
Hence
\[
 \log Z_T(\lambda)\le\lambda m_H.
\]
Combining this with \eqref{eq16} proves
$$
 \rho_T(\lambda)
 \geq
 e^{-2\lambda\sigma^d}\log Z_T(\lambda).
$$
\end{proof}

We are now ready to prove Theorem \ref{qiang}.
\begin{proof}[Proof of Theorem \ref{qiang}]
Recall that in Lemma \ref{lem:first-local},
$$
 z=\mathbb{E}\log Z_T(\lambda).
$$
Taking expectations in Lemma~\ref{lem:local-partition} and applying
Lemma~\ref{secondlocal}, we obtain
$$
 \tilde{\rho}
 \geq
 3^{-d}e^{-2\lambda\sigma^d}z.
$$
Lemma~\ref{lem:first-local} also gives
$$
 \tilde{\rho}\geq\lambda e^{-z}.
$$
Consequently,
$$
 \tilde{\rho}
 \geq
 \inf_{z\geq0}\max\left\{
   \lambda e^{-z},
   \,
   3^{-d}e^{-2\lambda\sigma^d}z
 \right\}.
$$

The function \(z\mapsto\lambda e^{-z}\) is decreasing, while
\(z\mapsto 3^{-d}e^{-2\lambda\sigma^d}z\) is increasing. Therefore the infimum over \(z\ge0\)
of their maximum occurs at their unique intersection, and $\tilde{\rho}\geq 3^{-d}e^{-2\lambda\sigma^d}z^*$, where
$$
 \lambda e^{-z^*}=3^{-d}e^{-2\lambda\sigma^d}z^*.
$$
Equivalently,
$$
 z^*e^{z^*}=\lambda3^{d}e^{2\lambda\sigma^d}.
$$
So
$$
 z^*=W\left(\lambda3^{d}e^{2\lambda\sigma^d}\right)\text{ and }\tilde{\rho}\geq 3^{-d}e^{-2\lambda\sigma^d}\cdot W\left(\lambda3^{d}e^{2\lambda\sigma^d}\right),
$$
where $W$ is the Lambert $W$-function.

Taking $\lambda=\sigma^{-d}/d$, we have $2\lambda\sigma^d=2/d$ and $\lambda3^de^{2\lambda\sigma^d}=\frac{(3/\sigma)^d}{d}e^{2/d}$.
Since $3/\sigma=3/(1+\sqrt3)>1$, $\frac{(3/\sigma)^d}{d}e^{2/d}$ tends to infinity exponentially in $d$.

For $x\to\infty$,
$$
 W(x)=\log x-\log\log x
      +O\left(\frac{\log\log x}{\log x}\right).
$$
We calculate
$$
 W\left(\lambda3^{d}e^{2\lambda\sigma^d}\right)
 =
 d\log\frac{3}{\sigma}-2\log d+O(1).
$$
Therefore,
\begin{align}\label{eq18}
 \tilde{\rho}
 &\geq
 3^{-d}e^{-2/d}
 \left(
   d\log\frac3\sigma-2\log d+O(1)
 \right)\nonumber\\
 &=
 \left(
   \log\frac3\sigma
   -O\left(\frac{\log d}{d}\right)
 \right)d\,3^{-d}.
\end{align}

Let $\mathcal{C}$ be an arbitrary nonempty finite collection of closed
unit balls. By \eqref{eqp} and
\eqref{eq18},
$$
 \frac{\alpha(\mathcal{C})\omega_d}{\operatorname{vol}(U)}
 \ge
 \left(
   \log\frac3\sigma
   -O\left(\frac{\log d}{d}\right)
 \right)d\,3^{-d},
$$
where $\sigma=1+\sqrt3$. Since the bound is uniform in
$\mathcal{C}$, it follows that
$$
 f(B^d)
 \ge
 \left(
   \log\frac{3}{1+\sqrt3}
   -O\left(\frac{\log d}{d}\right)
 \right)d\,3^{-d},
$$
where the implicit constant in the $O$-term is absolute and independent of the collection $\mathcal{C}$.
\end{proof}

\section{Concluding remarks}
For a convex body $K$, let $\delta(K)$ denote its translative packing density. The known values and lower bounds are summarized in Table \ref{table1}. 
\begin{table*}[b]
\renewcommand\arraystretch{1.5}
    \caption{Known bounds for $\delta(K)$ and $f(K)$}
    \centering
    \begin{tabular}{|c|c|c|}
        \hline
        Types of convex bodies &$Q^d$ & $B^d$\\
        \hline

        Known bounds for $\delta(K)$&  $\delta(Q^d)=1$ & $\delta(B^d)=\Omega(d2^{-d})$ \cite{MR3898718}\\\hline

        Known bounds for $f(K)$& $f(Q^d)=2^{-d}$ \cite{MR30782}& $f(B^d)=\Omega(d3^{-d})$ (Theorem \ref{qiang})  \\
        \hline
    \end{tabular}
    \label{table1}
\end{table*}
Motivated by Table \ref{table1}, we propose the following conjecture, which relates $\delta(K)$ and $f(K)$.
\begin{conjecture}\label{kunnan}
Let $\{K_d\}_{d\geq1}$ be a family of convex bodies, where $K_d\subseteq\mathbb{R}^d$ for every $d\geq1$. If there exist absolute constants $C>0, k\geq0$, and $a\geq1$, independent of $d$, such that
$$
\delta(K_d)\geq Cd^ka^{-d}
$$
for every $d\geq1$, then there exists an absolute constant $C'>0$ such that
$$
f(K_d)\geq C'd^k(1+a)^{-d}
$$
for every $d\geq1$.
\end{conjecture}

\bibliographystyle{abbrv}
\bibliography{radocovering_REF}
\end{document}